\newtheorem{theorem}{Theorem}[section]
\newtheorem{lemma}[theorem]{Lemma}
\newtheorem{proposition}[theorem]{Proposition}
\theoremstyle{definition}
\newtheorem{remark}[theorem]{Remark}
\numberwithin{equation}{section}
\begin{document}

\title[A condition equivalent to the     H\"{o}lder   continuity]{A condition equivalent to the H\"{o}lder continuity of
harmonic functions on unbounded Lipschitz domains}

\author{Marijan  Markovi\'{c}}

\address{Faculty of Science and Mathematics\endgraf  University of Montenegro\endgraf  D\v{z}ord\v{z}a Va\v{s}ingtona BB
\endgraf 81000 Podgorica\endgraf Montenegro}

\email{marijanmmarkovic@gmail.com}

\begin{abstract}
Our main result concerns the behavior of bounded harmonic functions on a domain in   $\mathbb{R}^N$         which may be
represented as a strict epigraph of a Lipschitz function on $\mathbb{R}^{N-1}$. Generally speaking,      the result says
that the H\"{o}lder continuity of a harmonic function on such a domain is equivalent to the  uniform          H\"{o}lder
continuity along the straight lines determined by the vector  $\mathbf{e}_N$,                                      where
$\mathbf{e}_1,\mathbf{e}_2,\dots,\mathbf {e}_N$ is the base of standard vectors in $\mathbb{R}^N$.

More precisely, let $\Psi$ be a Lipschitz function on     $\mathbb {R}^{N-1}$, and $U$ be a real-valued bounded harmonic
function on $E_\Psi=\{(x',x_N): x'\in\mathbb{R}^{N-1}, x_N>\Psi(x')\}$.          We show that for   $\alpha\in(0,1)$ the
following  two conditions on   $U$   are equivalent:

(a) There exists a constant $C$ such that
\begin{equation*}
| U(x',x_N)  -   U(x',y_N)|\le C |x_N - y_N|^\alpha,\quad x'\in \mathbb {R}^{N-1}, x_N, y_N > \Psi (x');
\end{equation*}

(b) There exists   a constant $\tilde {C}$ such that
\begin{equation*}
|U(x) - U (y)|\le \tilde{C}  |x-y|^\alpha,\quad x, y\in E_\Psi.
\end{equation*}
Moreover,   the constant $\tilde {C}$ depends linearly on $C$.

The result holds as well for vector-valued harmonic functions and, therefore, for analytic mappings.
\end{abstract}

\subjclass[2020]{Primary 31B05, 26A16; Secondary 31B25}

\keywords{Bounded harmonic functions;  Lipschitz domains; Lipschitz type conditions; H\"{o}lder continuity; Schwarz type
lemma}

\maketitle

\section{Introduction and the main result}

In this paper $\alpha$ is a real number in $(0,1)$. We denote by $\Lambda^\alpha(E)$  the Lipschitz class of real-valued
functions on a non-empty set $E\subseteq \mathbb{R}^N$.                A function $f$ belongs to this    class  if it is
$\alpha$-H\"{o}lder  continuous, i.e.,  there exists  a  constant $C$ such that
\begin{equation*}
|f(x)  -  f (y)| \le  C  |x-y|^{\alpha},\quad x, y\in E.
\end{equation*}

In 1997, Dyakonov \cite{DYAKONOV.ACTM} proved that an analytic function on  the  unit disk $\mathbb {B} ^2$,  continuous
on the      closed unit disk,        is $\alpha$-H\"{o}lder continuous on $\mathbb {B} ^2$ if and only if its modulus is
$\alpha$-H\"{o}lder  continuous on the unit circle, i.e.,  $|f|\in \Lambda ^ \alpha (\partial \mathbb {B}^2)$,   and the
following  condition   is  satisfied
\begin{equation}\label{EQ.DYAKONOV.RADIAL}
| |f(r\zeta) | - |f(\zeta)|  |\le c (1-r)^ \alpha,\quad r\in (0,1),\zeta\in \partial \mathbb {B}^2,
\end{equation}
where  $c$ is a constant.  Pavlovi\'{c}  \cite{PAVLOVIC.ACTM}  gave a new proof of this result.

In 2007, Pavlovi\'{c} \cite{PAVLOVIC.RMI} establish  a counterpart of the    Dyakonov theorem   for real-valued harmonic
functions on  the  unit  ball  $\mathbb{B}^N\in \mathbb{R}^N$, continuous on the closure of the domain. He   showed that
it suffices to assume that the modulus of a harmonic function                                       belongs to the class
$\Lambda ^\alpha (\partial \mathbb{B}^N)$, or that the Lipschitz type  condition \eqref{EQ.DYAKONOV.RADIAL}           is
satisfied for $r\in (0,1)$ and $\zeta\in \partial \mathbb {B}^N$,                       to conclude that the function is
$\alpha$-H\"{o}lder continuous on $\mathbb{B}^N$.

It is natural  to ask  what can be said  for harmonic functions on  domains  in $\mathbb{R}^N$ different  from the  unit
ball.           It seems that one cannot give an easy answer even in the case of bounded domains with a smooth boundary.
Ravisankar considers this in his work \cite{RAVISANKAR.CVEE},      as well as in his Ph.D. thesis \cite{RAVISANKAR.PHD}.
Here, the author considers the behavior of a     function along transverse curves with respect to the boundary of    the
domain.

This work is devoted to the study of the Lipschitz-type condition posed on bounded harmonic functions defined on      an
unbounded domain in $\mathbb{R}^N$, which boundary is the graph of a Lipschitz function on $\mathbb{R}^{N-1}$.         A
domain of the such type is called  the strict epigraph of a function.       Our main theorem states that if a   harmonic
function is  $\alpha$-H\"{o}lder continuous only along  straight lines parallel  to the $x_N$-axis,           then it is
$\alpha$-H\"{o}lder  continuous   globally.

The main theorem of this paper is stated in the following.     Its proof is given in the next section.

\begin{theorem}[The  main  theorem]\label{TH.MAIN}
Let $\Psi:\mathbb{R}^{N-1}\to\mathbb{R}$ be $L$-Lipschitz, i.e., assume that
\begin{equation*}
|\Psi (x) - \Psi (y) |\le L |x-y|,\quad x,y\in \mathbb {R}^{N-1}.
\end{equation*}
If for  a bounded real-valued harmonic function $U$ on
\begin{equation*}
E_\Psi = \{x = (x',x_N)\in \mathbb{R}^N :x'\in \mathbb{R}^{N-1}, x_N> \Psi (x')\} \subseteq \mathbb{R}^N,
\end{equation*}
there exists a constant $C$ such that
\begin{equation*}
|U(x', x_N)  -  U (x', y_N)| \le   C  |x_N- y_N|^\alpha,\quad  x'\in \mathbb{R}^{N-1}, y_N, x_N>\Psi (x'),
\end{equation*}
where  $\alpha \in (0,1)$, then there  are   constants   $C_1$   and  $C_2$  such that
\begin{equation*}\begin{split}
\left|\frac  {\partial }{\partial x_N} U(x)\right| \le C_1 d(x,\partial E_\Psi)^{\alpha-1},
\quad |\nabla U (x)| \le   C _2  d(x,\partial E_\Psi) ^{\alpha-1} ,\quad x\in E_\Psi.
\end{split}\end{equation*}
Moreover,  $U\in \Lambda^\alpha (E_\Psi)$, i.e., there exists a constant $C_3$ for which we have
\begin{equation*}
|U(x) - U (y)| \le  C_3   |x-y|^{\alpha},\quad x,y\in E_\Psi.
\end{equation*}
In particular, $U$ has a continuous extension on the epigraph of $\Psi$.     The constants    $C_i$, $i=1,2,3$  linearly
depend on $C$.  
\end{theorem}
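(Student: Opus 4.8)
The plan is to establish the two interior gradient estimates first and then deduce the global H\"{o}lder bound by a Hardy--Littlewood type argument, keeping every constant proportional to $C$ so that the asserted linear dependence comes for free. A preliminary geometric fact I would record is that for $x=(x',x_N)\in E_\Psi$,
\begin{equation*}
\frac{x_N-\Psi(x')}{\sqrt{1+L^2}}\le d(x,\partial E_\Psi)\le x_N-\Psi(x'),
\end{equation*}
the lower bound being a one-line minimization using the $L$-Lipschitz condition on $\Psi$. This lets me pass freely between $d(\cdot,\partial E_\Psi)$ and the vertical height $x_N-\Psi(x')$, and it shows $d(\cdot,\partial E_\Psi)$ is comparable on any ball $B(x,d(x,\partial E_\Psi)/2)$.

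For the normal derivative I would argue as follows. Since $\partial_N U$ is harmonic, the mean value property over $B=B(x,\rho)$ with $\rho<d(x,\partial E_\Psi)$ gives $\partial_N U(x)=|B|^{-1}\int_B \partial_N U$. Slicing $B$ by vertical lines and applying the fundamental theorem of calculus on each slice turns the inner integral into a difference $U(y',b(y'))-U(y',a(y'))$ of values of $U$ along a vertical segment of length at most $2\rho$; the hypothesis bounds each such difference by $C(2\rho)^\alpha$. Integrating over the $(N-1)$-dimensional projection of $B$ and dividing by $|B|=\omega_N\rho^N$ yields $|\partial_N U(x)|\lesssim C\,\rho^{\alpha-1}$, and letting $\rho\uparrow d(x,\partial E_\Psi)$ gives the first estimate with $C_1$ a dimensional multiple of $C$.

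The substantive step is the tangential estimate, and this is where I expect the main difficulty: the hypothesis controls $U$ only in the $x_N$-direction, so horizontal oscillation must be manufactured from harmonicity. My plan exploits that vertical rays stay in $E_\Psi$. Fix $x=(x',x_N)$, a nearby horizontal point $w'$ with $s=|x'-w'|$ small, and set $g(\sigma)=U(x',x_N+\sigma)-U(w',x_N+\sigma)$. Because $U$ is bounded and $d(\cdot,\partial E_\Psi)\to\infty$ up a vertical ray, the interior gradient estimate forces $g(\sigma)\to 0$ as $\sigma\to\infty$, so $g(0)=-\int_0^\infty g'(\sigma)\,d\sigma$. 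Now $g'(\sigma)$ is a horizontal difference of $\partial_N U$, which I bound by $s$ times the interior estimate for $\nabla(\partial_N U)$; applying the already-established normal bound on a ball of radius $\sim d$ gives $|\nabla(\partial_N U)(\xi)|\lesssim C_1\,d(\xi,\partial E_\Psi)^{\alpha-2}$. Since $d(\cdot,\partial E_\Psi)\gtrsim (x_N+\sigma-\Psi)/\sqrt{1+L^2}$ along the relevant segment, the integral $\int_0^\infty (h+\sigma)^{\alpha-2}\,d\sigma=h^{\alpha-1}/(1-\alpha)$ (with $h=x_N-\Psi(x')$) converges precisely because $\alpha<1$, giving $|g(0)|\lesssim C_1\,s\,d(x,\partial E_\Psi)^{\alpha-1}/(1-\alpha)$. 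Dividing by $s$ and letting $w'\to x'$ produces $|\partial_i U(x)|\le C_2'\,d(x,\partial E_\Psi)^{\alpha-1}$ for each tangential index, and combining with the normal bound gives the full gradient estimate with $C_2$ linear in $C$. The only delicate points are checking that all intermediate points stay in $E_\Psi$ (which forces $s$ to be a fixed fraction of $d$ depending on $L$) and the uniform interior second-derivative estimate, both routine once the geometry fact is in hand.

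Finally, from $|\nabla U(x)|\le C_2\,d(x,\partial E_\Psi)^{\alpha-1}$ I would deduce $U\in\Lambda^\alpha(E_\Psi)$ by the standard dichotomy. If $|x-y|\le \tfrac14\max(d(x,\partial E_\Psi),d(y,\partial E_\Psi))$, the segment $[x,y]$ lies in $E_\Psi$ with $d(\cdot,\partial E_\Psi)\gtrsim|x-y|$ along it, and integrating the gradient bound gives $|U(x)-U(y)|\le C_3|x-y|^\alpha$ directly. Otherwise both points lie within $O(|x-y|)$ of the boundary; here I would use that the hypothesis extends $U$ continuously to $\partial E_\Psi$ along vertical rays with $|U(x)-U(x',\Psi(x'))|\le C(x_N-\Psi(x'))^\alpha$, reducing matters to comparing the two vertical boundary projections, which I handle by lifting both to height $\sim|x-y|$ and invoking the interior case already established. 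Collecting the three pieces gives the global estimate with $C_3$ linear in $C$ and, as a byproduct, the continuous extension to the closed epigraph.
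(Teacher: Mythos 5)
Your proposal is correct, and its second and third steps follow essentially the paper's route: the tangential estimate comes from the mixed second-derivative bound $\left|\nabla \tfrac{\partial}{\partial x_N}U\right|\le c\,C_1 d(\cdot,\partial E_\Psi)^{\alpha-2}$ (obtained, exactly as in the paper, by applying the Schwarz-lemma gradient estimate to the harmonic function $\tfrac{\partial}{\partial x_N}U$ on a ball of radius comparable to the distance), integrated up a vertical ray and combined with the decay of the gradient of a bounded harmonic function as the distance to the boundary tends to infinity; and the global H\"{o}lder bound comes from lifting both points vertically (paid for by the hypothesis) and integrating the gradient bound along a segment deep inside the domain --- your near/far dichotomy with boundary projections is only a cosmetic variation of the paper's single lifting with $\min\{\lambda,\lambda'\}=(1+L)|x-y|$, and your height-versus-distance comparison $d(x,\partial E_\Psi)\ge (x_N-\Psi(x'))/\sqrt{1+L^2}$ plays the role of the paper's cone lemma. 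Where you genuinely diverge, and in fact improve on the paper, is the first and technically heaviest step, the bound $\left|\tfrac{\partial}{\partial x_N}U(x)\right|\le C_1 d(x,\partial E_\Psi)^{\alpha-1}$. The paper proves this by a bootstrap: it introduces the translates $U_\lambda(x)=U(x',x_N+\lambda)$ precisely because the a priori finiteness of $\sup_x d(x,\partial E_\Psi)^{1-\alpha}\left|\tfrac{\partial}{\partial x_N}U(x)\right|$ is unclear, verifies that each translated supremum $A_\lambda$ is finite, derives via a Taylor expansion the self-improvement inequality $A_\lambda\le \tfrac{C}{\gamma}+\tfrac12 A_\lambda$ for a suitably chosen $\gamma$, and only then lets $\lambda\to 0$, ending with $C_1=7K_NC$. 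Your argument --- the volume mean value property for the harmonic function $\tfrac{\partial}{\partial x_N}U$ over $B(x,\rho)$, Fubini, and the fundamental theorem of calculus on each vertical chord, so that the hypothesis bounds every slice by $C(2\rho)^\alpha$ --- is a direct pointwise bound requiring no a priori finiteness, no translation trick, and no bootstrapping; it yields the better constant $2^{\alpha-1}K_NC$, and it uses no regularity of $\partial E_\Psi$ at all, which makes transparent the paper's closing remark that the existence of $C_1$ ought to hold even without conditions on the boundary. The details you defer are indeed routine: in the tangential step one must keep the horizontal offset $s$ below a fixed multiple of $d(x,\partial E_\Psi)/(1+L)$ so that all intermediate points stay in $E_\Psi$, and the convergence of $\int_0^\infty(h+\sigma)^{\alpha-2}\,d\sigma$ for $\alpha<1$ is exactly what your preliminary geometric fact is needed for.
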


Similar statements may be found in \cite{AIKAWA.BLMS},              and in the papers mentioned in the references there.
In particular,        it is  known that if a  harmonic  function satisfies   the $\alpha$-H\"{o}lder condition   on  the
boundary of a  bounded Lipschitz domain in $\mathbb{R}^N$, then the function  is  $\alpha$-H\"{o}lder continuous  on the
domain,   provided  that $\alpha<\gamma$ where the  constant  $\gamma \le 1$ depends on the domain.  This result is well
known in the case of the unit ball or the upper half-space \cite{STEIN.BOOK}.

\section{Proof of the  main theorem}

We shall start this section with some preliminaries and auxiliary results.       For the theory of harmonic functions on
domains in $\mathbb{R}^N$  we refer to \cite{AXLER.BOOK}.

We need the gradient estimate for bounded harmonic functions on  a domain in $\mathbb{R}^N$  which includes the distance
function.    Here we also mention the estimate of the derivative in an arbitrary direction. The derivative of a function
$U$ in the direction $l\in\mathbb{R}^N$,   $|l|=1$,  is denoted  by
\begin{equation*}
\frac{\partial}{\partial \ell}U(x) = \lim_{t\to 0} \frac{U (x+t\ell ) - U(x)}{t}.
\end{equation*}
This   auxiliary result is a consequence of the Schwarz lemma for harmonic functions on the unit ball $\mathbb{B}^N$. It
will  be used several times in the proof of the main theorem. The lemma that follows after it concerns the estimate from
below of the distance of $x+\lambda\mathbf{e}_N$, $\lambda >0$, $x\in E_\Psi$, from the boundary of the         epigraph
$E_\Psi$ of the Lipschitz function $\Psi$. Here, we use some geometric considerations.

The content of the following proposition can be found in \cite[Theorem 6.26]{AXLER.BOOK}.

\begin{proposition}[Schwarz lemma for harmonic functions]\label{PROP.SCHWARZ.HARMONIC}
Let $U$ be a real-valued harmonic function on $\mathbb {B}^N$, $ |U(x)| \le 1$, $x\in \mathbb {B}^N$. Then
\begin{equation*}\label{EQ.GRAD.0}
|\nabla U (0)| \le  \frac {2 m_{N-1}(\mathbb {B}^{N-1})}{ m_ N(\mathbb {B}^N )},
\end{equation*}
where   $m_N$  is the Lebesgue measure on $\mathbb{R}^N$.
\end{proposition}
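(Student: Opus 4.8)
The plan is to bound each directional derivative $\partial_{\mathbf{e}}U(0)=\mathbf{e}\cdot\nabla U(0)$ for an arbitrary unit vector $\mathbf{e}$, since $|\nabla U(0)|=\max_{|\mathbf{e}|=1}(\mathbf{e}\cdot\nabla U(0))$ by Cauchy--Schwarz, so controlling all directional derivatives controls the gradient norm. The idea rests on two facts: first, every partial derivative of a harmonic function is again harmonic, so the mean value property applies to $\partial_{\mathbf{e}}U$; second, $\partial_{\mathbf{e}}U$ is the divergence of the field $U\mathbf{e}$, which converts the volume average into a boundary integral against the linear functional $\zeta\mapsto\mathbf{e}\cdot\zeta$. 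Because $U$ is assumed harmonic only on the open ball, I would not work on $\partial\mathbb{B}^N$ directly but on a concentric ball $B(0,r)$ with $r\in(0,1)$, where $U$ is smooth up to the closure, and let $r\to 1^-$ at the end.

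Concretely, for fixed $r\in(0,1)$ the mean value property over $B(0,r)$ gives
\begin{equation*}
\partial_{\mathbf{e}}U(0)=\frac{1}{m_N(B(0,r))}\int_{B(0,r)}\partial_{\mathbf{e}}U\,dm_N,
\end{equation*}
and since $\partial_{\mathbf{e}}U=\operatorname{div}(U\mathbf{e})$ for the constant vector $\mathbf{e}$, the divergence theorem with outward normal $\nu=\zeta/r$ on $\partial B(0,r)$ turns this into
\begin{equation*}
\partial_{\mathbf{e}}U(0)=\frac{1}{r\,m_N(B(0,r))}\int_{\partial B(0,r)}U(\zeta)\,(\mathbf{e}\cdot\zeta)\,d\sigma(\zeta),
\end{equation*}
where $\sigma$ denotes surface measure. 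Using $|U|\le 1$ and $m_N(B(0,r))=r^N m_N(\mathbb{B}^N)$, the whole problem reduces to estimating the geometric quantity $\int_{\partial B(0,r)}|\mathbf{e}\cdot\zeta|\,d\sigma(\zeta)$.

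The remaining, and really the only substantive, step is to evaluate this last integral and recognize the constant $m_{N-1}(\mathbb{B}^{N-1})$. By rotational symmetry I may take $\mathbf{e}=\mathbf{e}_N$, so the integrand is $|\zeta_N|$. Splitting the sphere into the two hemispheres $\{\zeta_N\gtrless 0\}$ and applying the divergence theorem to the constant field $\mathbf{e}_N$ on, say, the upper half-ball---whose boundary is the upper hemisphere together with the equatorial disk of radius $r$ in $\mathbb{R}^{N-1}$, of measure $r^{N-1}m_{N-1}(\mathbb{B}^{N-1})$---yields $\int_{\partial B(0,r)}|\zeta_N|\,d\sigma=2\,r^N m_{N-1}(\mathbb{B}^{N-1})$. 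Substituting back gives $|\partial_{\mathbf{e}}U(0)|\le 2\,m_{N-1}(\mathbb{B}^{N-1})/(r\,m_N(\mathbb{B}^N))$; letting $r\to 1^-$ and taking the supremum over unit $\mathbf{e}$ produces the asserted bound. The main obstacle is thus not analytic but bookkeeping: carrying the powers of $r$ correctly through the dilation, and pinning down the exact surface integral so that the constant comes out precisely as $2\,m_{N-1}(\mathbb{B}^{N-1})/m_N(\mathbb{B}^N)$ rather than merely proportional to it.
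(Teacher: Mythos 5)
Your proof is correct: the mean value property applied to the harmonic function $\partial_{\mathbf{e}}U$ on $B(0,r)$, the divergence theorem converting the volume average into the boundary integral of $U(\zeta)(\mathbf{e}\cdot\zeta)/r$, the flux computation $\int_{\partial B(0,r)}|\zeta_N|\,d\sigma = 2r^N m_{N-1}(\mathbb{B}^{N-1})$ over the half-ball, and the limit $r\to 1^-$ all check out, and the constant comes out exactly as stated. Note that the paper itself offers no proof, only the citation to Theorem 6.26 of Axler--Bourdon--Ramey; your argument is essentially the standard one found there, so there is nothing to reconcile.
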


\begin{lemma}\label{LE.GRADIENT.DISTANCE}
Let $U$  be a bounded harmonic function with real value     on a domain $D\subseteq \mathbb{R}^N$. Then
\begin{equation*}
|\nabla U (x) |, \left|\frac {\partial}{\partial \ell}U(x)\right| 
\le  \frac{K_ N} {d(x,\partial D)} \sup_{y\in D} |U(y)|,\quad x\in D,\ell\in\mathbb{R}^N, |l|=1,
\end{equation*}
where
\begin{equation*}\begin{split}
K_ N & = \frac {2m_{N-1} (\mathbb{B}_{N-1})} {m_N (\mathbb{B} _{N})}.
\end{split}\end{equation*}
In particular,   this estimate holds for the $i$-th partial derivative             $\frac {\partial}{\partial x_i}U(x)$,
$1\le  i\le N$.

In case of an unbounded  domain  we have
\begin{equation*}
\lim _{d(x, \partial D) \to  \infty}  |\nabla U (x) | = 0.
\end{equation*}
\end{lemma}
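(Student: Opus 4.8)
The plan is to reduce the estimate at an arbitrary point $x\in D$ to the Schwarz lemma at the centre of the unit ball by an affine normalization. Fix $x\in D$ and put $r=d(x,\partial D)$ and $M=\sup_{y\in D}|U(y)|$. If $M=0$ the function vanishes identically and the inequality is trivial, so assume $M>0$. By the definition of the distance to the boundary, the open ball $B(x,r)$ is contained in $D$, so the function
\begin{equation*}
V(z)=\frac{1}{M}\,U(x+rz),\quad z\in\mathbb{B}^N,
\end{equation*}
is well defined. First I would verify that $V$ is harmonic on $\mathbb{B}^N$: harmonicity is preserved under translation and dilation of the argument, since the Laplacian of $z\mapsto U(x+rz)$ equals $r^2(\Delta U)(x+rz)=0$. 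Moreover $|V(z)|\le 1$ on $\mathbb{B}^N$ because $|U|\le M$ on $B(x,r)\subseteq D$.

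Next I would apply Proposition~\ref{PROP.SCHWARZ.HARMONIC} to $V$ to obtain $|\nabla V(0)|\le K_N$. By the chain rule $\nabla V(0)=\frac{r}{M}\nabla U(x)$, and therefore
\begin{equation*}
|\nabla U(x)|=\frac{M}{r}\,|\nabla V(0)|\le\frac{K_N}{d(x,\partial D)}\sup_{y\in D}|U(y)|,
\end{equation*}
which is the asserted gradient bound. For a unit vector $\ell$ the directional derivative satisfies $\frac{\partial}{\partial\ell}U(x)=\nabla U(x)\cdot\ell$, whence the Cauchy--Schwarz inequality together with $|\ell|=1$ gives $\left|\frac{\partial}{\partial\ell}U(x)\right|\le|\nabla U(x)|$, so the same bound transfers to it. Taking $\ell=\mathbf{e}_i$ yields the stated estimate for each partial derivative $\frac{\partial}{\partial x_i}U$.

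Finally, for an unbounded domain the supremum $M$ is finite by the boundedness hypothesis, so letting $d(x,\partial D)\to\infty$ in the gradient bound forces $|\nabla U(x)|\to 0$. The argument is essentially a normalization, and I do not expect a serious obstacle; the only points requiring care are the verification that the rescaled $V$ is harmonic and bounded by $1$ on the whole unit ball (which rests on the containment $B(x,r)\subseteq D$) and the factor $r/M$ produced by the chain rule, which is precisely what converts the fixed constant $K_N$ of the Schwarz lemma into the distance-dependent estimate.
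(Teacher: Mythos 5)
Your proposal is correct and follows essentially the same route as the paper: rescale and normalize $U$ to a harmonic function $V$ on the unit ball via $V(z)=M^{-1}U(x+d(x,\partial D)z)$, apply the Schwarz lemma (Proposition~\ref{PROP.SCHWARZ.HARMONIC}) at the origin, and unwind the chain rule; the directional-derivative and partial-derivative bounds then follow from $\frac{\partial}{\partial\ell}U=\left<\nabla U,\ell\right>$ exactly as in the paper. Your treatment is in fact slightly more complete, since you explicitly dispose of the degenerate case $M=0$ and spell out the limit claim for unbounded domains, both of which the paper leaves implicit.
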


\begin{proof}
Since $\frac{\partial}{\partial \ell}U(x)  =  \left<\nabla U(x),\ell\right>$, we have
\begin{equation*}
\left|\nabla U(x)\right| =    \sup_{ \ell\in \mathbb{R}^N, |\ell|=1}  \left| \frac{\partial}{\partial \ell}U(x) \right|,
\end{equation*}
so   it is enough to  prove   the gradient  estimate.

Assume that $U$ is not a  constant  function, and denote  $M  =  \sup_{y\in D}  |U(y)|$.              We shall  consider
\begin{equation*}
V (z) = M^{-1}  U (x  +  d(x,\partial D)  z),\quad   z\in \mathbb{B}^N.
\end{equation*}
It  is  a  real-valued  harmonic function  on  the  unit  ball. Since  $|V (z)|<1$,  $z\in \mathbb{B}^N$,  we may  apply
the gradient  estimate  given in  Proposition \ref{PROP.SCHWARZ.HARMONIC}.  By this theorem,  we have
\begin{equation*}
 M ^{-1}  d(x,\partial D)   |\nabla U(x)| = |\nabla V (0)|\le K_N.
\end{equation*}
From this inequality, we derive the gradient  estimate for  $U$  at  $x\in D$.
\end{proof}

In the sequel,       we denote by $B(a,r) = \{x\in\mathbb{R}^N:|x-a|<r\}$ the ball in $\mathbb {R}^N$ with the center at
$a\in \mathbb{R}^N$ and the radius $r>0$.

\begin{lemma}\label{LE.DISTANCE.FROM.GRAPH}
Let $\Psi: \mathbb{R}^{N-1}\to\mathbb{R}$ be a $L$-Lipschitz function.   For $x = (x',x_N)\in E_\Psi$ and $\lambda\ge 0$
let us denote
\begin{equation*}
x_\lambda  =  x  + \lambda \mathbf {e}_N= (x', x_N + \lambda)\in E_\Psi.
\end{equation*}
Then we have
\begin{equation*}
d(x_\lambda, \partial E_\Psi) \ge d(x, \partial E_\Psi) \cos \gamma +  {\lambda\cos \gamma},
\end{equation*}
where  $\gamma  = \arctan L$.
\end{lemma}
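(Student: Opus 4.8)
The plan is to reduce the lemma to a two-sided comparison between the Euclidean distance to the boundary and the vertical height above the graph. For $x = (x',x_N)\in E_\Psi$ write $h(x) = x_N - \Psi(x') > 0$ for this height; note that $\cos\gamma = 1/\sqrt{1+L^2}$ since $\gamma = \arctan L$. I will first establish
\begin{equation*}
h(x)\cos\gamma \le d(x,\partial E_\Psi) \le h(x),\quad x\in E_\Psi,
\end{equation*}
and then the lemma will follow by elementary bookkeeping.

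The upper bound is immediate: the point $(x',\Psi(x'))$ lies on $\partial E_\Psi$ and is at vertical distance $h(x)$ from $x$, so $d(x,\partial E_\Psi)\le h(x)$. For the lower bound I would show that the open ball $B(x, h(x)\cos\gamma)$ is contained in $E_\Psi$, which forces $d(x,\partial E_\Psi)\ge h(x)\cos\gamma$. Take any $y=(y',y_N)$ with $|y-x| < h(x)\cos\gamma$ and set $u = |y'-x'|$, $v = x_N - y_N$, so that $u^2 + v^2 < h(x)^2\cos^2\gamma$. Since $\Psi$ is $L$-Lipschitz,
\begin{equation*}
\Psi(y') - y_N \le \Psi(x') + Lu - y_N = -h(x) + Lu + v.
\end{equation*}
By the Cauchy--Schwarz inequality, $Lu + v \le \sqrt{L^2+1}\,\sqrt{u^2+v^2} < \sqrt{L^2+1}\,h(x)\cos\gamma = h(x)$, hence $\Psi(y') - y_N < 0$ and $y\in E_\Psi$. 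This inclusion is the technical heart of the argument; the precise constant $\cos\gamma = 1/\sqrt{1+L^2}$ is exactly what makes the Cauchy--Schwarz estimate close, and it is the step I expect to require the most care.

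With the two-sided estimate in hand, the lemma is a one-line combination. The vertical height of $x_\lambda = (x', x_N + \lambda)$ is $h(x_\lambda) = h(x) + \lambda$. Applying the lower bound to $x_\lambda$ and then the upper bound to $x$ (which gives $h(x)\ge d(x,\partial E_\Psi)$) yields
\begin{equation*}
d(x_\lambda,\partial E_\Psi) \ge (h(x)+\lambda)\cos\gamma \ge d(x,\partial E_\Psi)\cos\gamma + \lambda\cos\gamma,
\end{equation*}
which is the claimed inequality. Everything outside the ball inclusion is direct, so the proof is short once the lower bound is secured.
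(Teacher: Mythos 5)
Your proof is correct, and it takes a genuinely different route from the paper's. The paper fixes a point $P\in\partial E_\Psi$ at which $d(x,\partial E_\Psi)$ is attained and argues synthetically, splitting into the cases $P=(x',\Psi(x'))$ and $P=(y',\Psi(y'))$ with $y'\ne x'$; in each case it compares $d(x_\lambda,\partial E_\Psi)$ with the distance to the boundary of a cone of opening $\tfrac{\pi}{2}-\gamma$ with vertex at $P$ (contained in $E_\Psi$ by the Lipschitz condition) and extracts the estimate from a similarity of triangles. You avoid the nearest-point case analysis entirely by proving the two-sided comparison $h(x)\cos\gamma \le d(x,\partial E_\Psi)\le h(x)$ for the vertical height $h(x)=x_N-\Psi(x')$, with the lower bound coming from the ball inclusion $B(x,h(x)\cos\gamma)\subseteq E_\Psi$, verified analytically: your Cauchy--Schwarz step is exactly right, since $Lu+v\le Lu+|v|\le \sqrt{L^2+1}\sqrt{u^2+v^2}$ and $\cos\gamma=1/\sqrt{1+L^2}$, and the final assembly $d(x_\lambda,\partial E_\Psi)\ge (h(x)+\lambda)\cos\gamma\ge (d(x,\partial E_\Psi)+\lambda)\cos\gamma$ is valid because $h(x_\lambda)=h(x)+\lambda$. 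What each approach buys: yours is fully analytic, requires no case distinction or appeal to pictures, and the height--distance comparison is a clean reusable fact; the paper's geometry, in its second case, actually yields the slightly sharper estimate $d(x_\lambda,\partial E_\Psi)\ge d(x,\partial E_\Psi)+\lambda\cos\gamma$ (which it then weakens to the stated form), and it makes the geometric origin of the constant $\cos\gamma$ visible as the half-opening of the Lipschitz cone. Both arguments deliver the same constant in the stated inequality, so nothing downstream in the main theorem is affected.
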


\begin{proof}
Denote  $d  =  d(x, \partial E_\Psi)$   and   $d'= d(x_\lambda, \partial E_\Psi)$.   There exists $P\in \partial E_\Psi$
such that $ d = |x - P|$  ($P$ is any point where the distance is achieved).

Assume first that $P = (x', \Psi (x'))$. Let $C_\gamma$ be the cone with the vertex at  $P$,   with the axis parallel to
$\mathbf{e}_N$ and the opening $\frac {\pi}2-\gamma$.                          Denote $R = d(x, \partial C_\gamma) $ and
$R'= d(x_\lambda,  \partial C_\gamma)$.     Since $\Psi$ is Lipschitz,    we have $d'\ge  R'$.   From the similarity  of
triangles,  we obtain the following proportion
\begin{equation*}
R': R = (d+\lambda):d,
\end{equation*}
from which  we  find    $R'  = \frac R d (d+\lambda)$. Since $\frac Rd = \sin (\frac \pi 2 - \gamma) = \cos \gamma$,  it
follows
\begin{equation*}
d'\ge R'\ge (d +\lambda)\cos \gamma,
\end{equation*}
which we aimed to prove.

Let us now  consider the case where $P$ is not of the preceding form, that is, $P =(y', \Psi (y'))$, $y'\ne x'$. Then we
can    obtain a better estimate from below of $d(x_\lambda, \partial E_\Psi)$.  We consider the intersection of $E_\Psi$
with the plane  $\Pi$, which is determined by the vectors $\Vec{xP}$ and $\mathbf{e}_N$.  The intersection of  $\Pi$ and
the boundary of the cone $C_\gamma $ is made up of two straight lines that form the angle      $\frac \pi2-\gamma$  with
$\mathbf{e}_N$.  In  this case one of them, let it be denoted by $l_\gamma$,                is the tangent to the circle
$\partial B(x,d)\cap  \Pi$  at  $P$. Denote   $R' = d(x_\lambda, l_\gamma)$.  From the  similarity of  triangles,   this
time   we  obtain
\begin{equation*}
R': d  = \left(\frac d{\cos \gamma} +\lambda\right) : \frac d{\cos \gamma}.
\end{equation*}
From this proportion we have
\begin{equation*}
R' =  d + \lambda \cos \gamma\ge d \cos \gamma  + \lambda \cos\gamma,
\end{equation*}
which implies   the inequality in this  lemma, since   $d'\ge R'$.
\end{proof}

\begin{proof}[Proof of the main theorem]
This proof is separated into three parts. In $i$-th part of the proof we establish       the existence  of the  constant
$C_i = C_i (C)$, $i=1,2,3$, where $C$ is the constant from the  assumed  inequality
\begin{equation}\label{EQ.VERTICAL}
|U(x', x_N) - U (x', y_N)| \le C |x_N- y_N|^\alpha,\quad  x'\in \mathbb{R}^{N-1}, y_N, x_N>\Psi (x').
\end{equation}
During the proof  we  shall see that each   $C_i$, $i=1,2,3$,   depends  linearly  on $C$.

\textit{I part}.
Here we show the existence of the constant  $C_1$        in  the estimate of the  $N$-th  partial  derivative   of  $U$.

For  $\lambda > 0$  we  shall  consider  the following  (harmonic)  function
\begin{equation*}
U _\lambda(x) =  U(x_\lambda )  =  U (x', x_N + \lambda),\quad x  = (x',x_N) \in E_\Psi.
\end{equation*}
In  other  words,  $U_\lambda$ is the   translation of $U$ for the vector $(-\lambda) \mathbf{e}_N$.

Attach to  $U_\lambda$  the  number
\begin{equation}\label{EQ.ALAMBDA}
A_\lambda  = \sup _{x \in E_\Psi} d (x, \partial E_\Psi )^{1 - \alpha}
\left| \frac {\partial} {\partial x_N} U_\lambda (x)\right|.
\end{equation}
We show below  that $A_\lambda$ is indeed finite, then we shall find an upper bound for $A_\lambda$, $\lambda>0$,  which
does not depend on $\lambda$. This is enough for the existence proof of the constant $C_1$.       Note that $A_0$ should
correspond to the function $U$.     However, it  seems  that  its finiteness  cannot be shown  in the way which follows.
This  is the reason for  introducing  $U_\lambda$.

By applying  Lemma \ref{LE.GRADIENT.DISTANCE} and  Lemma \ref{LE.DISTANCE.FROM.GRAPH},                         we obtain
\begin{equation*}\begin{split}
d (x, \partial E_\Psi )^{1 - \alpha}  \left| \frac {\partial} {\partial x_N} U_\lambda (x)\right|  
& = d (x, \partial E_\Psi )^{1 - \alpha} \left| \frac {\partial} {\partial x_N}  U  (x_\lambda)\right|
\\&  \le \frac { d (x, \partial E_\Psi )^{1 - \alpha} }{ d (x_\lambda, \partial  E_\Psi ) } 
K_N \sup _{y\in  E_\Psi}|U(y)|
\\&  \le \frac { d (x, \partial E_\Psi )^{1 - \alpha} }{ d (x, \partial E_\Psi) +\lambda }
\frac { K_N }{\cos \gamma}\sup _{y\in E_\Psi}|U(y)|.
\end{split}\end{equation*}
Considering  separately the cases  $d(x,\partial E)<1$   and   $d(x,\partial E)\ge 1$,         it is  not hard to derive
\begin{equation*}
d (x, \partial E_\Psi )^{1 - \alpha}  \left| \frac {\partial} {\partial x_N}  U_\lambda(x)\right|
\le  \max \{1, \lambda ^{-1}\} \frac {K_N}{\cos \gamma} \sup _{y\in E_\Psi}|U(y)|.
\end{equation*}
From the last inequality we conclude that $A_\lambda$ is the supremum of a bounded set,         which implies that it is
a finite number. However,  the  right side in the  above estimate is not bounded  in $\lambda$,   so we shall proceed to
obtain  one which is independent of $\lambda$.

For $x=(x',x_N)\in E_\Psi$,  for the sake of simplicity,  let us denote  $R = d(x,\partial E_\Psi)$.        Applying the
Taylor expansion  on the one variable function      $s\to U_\lambda(x',s)$    around $x_N$,                assuming that
$s\in (\Psi(x'),x_N)$,  we obtain that there exists  $t\in (s,x_N)$  such that
\begin{equation*}\begin{split}
U_\lambda (x', s) & 
=  U_\lambda  (x', x_N) + \frac {\partial } {\partial x_N}  U_\lambda (x',x_N) (s - x_N)  
\\&+ \frac {\partial^2} {\partial x_N^2}  U_\lambda (x',t)  \frac {(s-x_N)^2}2.
\end{split}\end{equation*}
In the above expansion let us take $s = x_N - \gamma R$, where the number $\gamma \in (0,1)$ will be chosen letter.   We
obtain
\begin{equation*}\begin{split}
U_\lambda (x', x_N  -  \gamma  R)  &
=  U_\lambda  (x', x_N) + \frac {\partial } {\partial x_N}  U_\lambda (x',x_N) ( - \gamma R )
 \\&+ \frac {\partial^2} {\partial x_N^2} U_\lambda (x',t)  \frac {(-\gamma R)^2}2.
\end{split}\end{equation*}
From the last equation we have
\begin{equation*}\begin{split}
\gamma \frac {\partial} {\partial x_N}  U_\lambda(x',x_N) R &
= U_\lambda  (x', x_N) - U_\lambda (x', x_N - \gamma R)
\\&+\gamma^2 \frac {\partial^2} {\partial x_N^2}  U_\lambda(x',t)   \frac {R^2}2.
\end{split}\end{equation*}
Taking the   absolute values on both sides above, we derive 
\begin{equation*}\begin{split}
\gamma \left| \frac {\partial } {\partial x_N} U_\lambda (x',x_N) \right|  R &
\le | U_\lambda  (x', x_N) -  U_\lambda (x', x_N - \gamma  R) |
\\& + \gamma^2 \left|\frac {\partial^2} {\partial x_N^2}  U_\lambda(x',t) \right|  \frac {R^2} 2.
\end{split}\end{equation*}
Applying now   \eqref{EQ.VERTICAL},  since $\gamma\in (0,1)$  and  $t\in (x_N - \gamma R,x_N)$,               we  obtain
\begin{equation}\label{EQ.FIRST.PART.1}\begin{split}
\gamma\left| \frac {\partial } {\partial x_N} U _\lambda (x) \right|  R  & \le C R ^{\alpha} + \gamma^2
\sup_{y\in {B}(x, \gamma R)} \left|\frac {\partial^2} {\partial x_N^2} U_\lambda (y) \right|\frac {R^2}2.
\end{split}\end{equation}

We shall use Lemma  \ref{LE.GRADIENT.DISTANCE}  for  (the harmonic function) $\frac {\partial} {\partial x_N} U_\lambda$
on the domain  $B(x, \frac {1+\gamma}2 R)\subseteq  E_\Psi$ in   order  to estimate  the  the second term on  the  right
side of \eqref{EQ.FIRST.PART.1}. Since  for $y\in {B}(x,\gamma R)$ we  have
$d(y,\partial B(x,\frac {1+\gamma}2 R))\ge \frac{1-\gamma}2 R$,  it follows
\begin{equation*}
\left|\frac {\partial^2} {\partial x_N^2} U_\lambda(y)\right|  \le
\frac {2K_N}{(1-\gamma) R} \sup_{z\in {B}(x,\frac{1+\gamma}2 R)}
\left |\frac {\partial} {\partial x_N}U_\lambda(z)\right|.
\end{equation*}

Applying the last estimate in  \eqref{EQ.FIRST.PART.1}, we arrive  at the following one
\begin{equation*}
\gamma\left| \frac {\partial } {\partial  x_N}  U_\lambda (x) \right| R \le
C R ^{\alpha} + \gamma^2 \frac {2K_N}{(1-\gamma) R} \sup_{z\in {B}(x,\frac{1+\gamma}2 R)} 
\left |\frac {\partial} {\partial x_N} U_\lambda(z)\right|\frac {R^2}2 .
\end{equation*}
After dividing it  by $\gamma R$, we obtain
\begin{equation}\label{EQ.FIRST.PART.2}
\left| \frac {\partial } {\partial  x_N} U_\lambda (x) \right| \le \frac{C} \gamma R ^{\alpha-1}
+   {\gamma} \frac {K_N}{{1-\gamma}} \sup_{z\in {B}(x,\frac{1+\gamma}2 R)}
\left |\frac {\partial} {\partial x_N}U_\lambda(z)\right|.
\end{equation}

Note that for $z\in {B}(x,\frac{1+\gamma}2 R)$ we have $d(z, \partial E_\Psi) \ge \frac {1-\gamma}2 R $. Having in  mind
the relation  \eqref{EQ.ALAMBDA}, since
\begin{equation*}
\left| \frac {\partial} {\partial x_N} U _\lambda (z)\right|
\le A_\lambda d(z,\partial E_\Psi)^{ \alpha-1}
\le A_\lambda \left( \frac {1-\gamma}2 R\right)^{ \alpha-1},
\end{equation*}
from \eqref{EQ.FIRST.PART.2} we obtain
\begin{equation*}
\left| \frac {\partial } {\partial x_N} U_\lambda (x)\right|
\le \frac{C}\gamma R^{\alpha-1}
+  \gamma  \frac { K_N}{1-\gamma} A_\lambda \left(\frac {1-\gamma}2 R \right)^{\alpha - 1},
\end{equation*}
from  which follows
\begin{equation}\label{EQ.LAST.ESTIMATE}
R^{1-\alpha}\left| \frac {\partial } {\partial x_N} U_\lambda(x)\right|
\le \frac{C}\gamma +  \gamma \frac{K_N}{1-\gamma}  A_\lambda \left(\frac {1-\gamma}2\right)^{ - 1}.
\end{equation}

Taking the  supremum on the left side  of   \eqref{EQ.LAST.ESTIMATE}   with  respect to  $x \in E_\Psi$ we have
\begin{equation*}\begin{split}
A_\lambda\le   \frac{C}\gamma + \frac {2\gamma K_N}{(1-\gamma)^2}  A_\lambda.
\end{split}\end{equation*}
At this moment we take  $\gamma \in (0,1)$ such that  $\frac {2\gamma K_N}{(1-\gamma)^2} = \frac 12$;      it is easy  to
check that  this quadratic  equation in $\gamma$  has  an  unique solution in  $(0,1)$.      From the last estimate   of
$A_\lambda$, since we are sure that   $A_\lambda$ is  finite, we obtain  $A_\lambda\le  \frac 2\gamma C$.       A simple
computation gives  the value  of  $\gamma$,  and  the estimate   $\frac 2\gamma\le  7  K_N$.        Therefore,  for  the
constant $C_1$ we set
\begin{equation*}
C_1 = 7 K_N C.
\end{equation*}

Since the estimate of  $A_\lambda$,  $\lambda>0$, we have just obtained, does not depend on $\lambda$, in the inequality
\begin{equation*}
\left| \frac {\partial} {\partial x_N} U (x',x_N+\lambda)\right|
= \left| \frac {\partial} {\partial x_N} U_\lambda(x',x_N) \right|
\le C_1 d(x,\partial E_\Psi)^{ \alpha-1},
\end{equation*}
we may let $\lambda\to 0$. In this way  we derive the following estimate  of the  $N$-th   partial  derivative
\begin{equation*}
\left| \frac {\partial} {\partial x_N} U(x',x_N) \right| \le C_1 d(x,\partial D)^{ \alpha-1},
\quad x=(x',x_N)\in E_\Psi,
\end{equation*}
which we aimed  in  this part of the proof.  As  one may expected,         the constant $C_1$   does not depend on  $L$.

\textit{II part}.  We   start now proving the  existence of the constant $C_2$     for  the gradient  estimate  of  $U$.

For $x=(x', x_N)\in E_\Psi$, as before, denote $R   =    d(x, \partial  E_\Psi)$.   Let us chose  arbitrary  a direction
vector $\ell\in \mathbb{R}^N$, $|\ell|=1$. Applying Lemma \ref{LE.GRADIENT.DISTANCE}  on         (the harmonic function)
$\frac{\partial}{\partial x_N} U$      on the ball  $B(x,\frac R 2)$ as a domain, then the just obtained estimate of the
$N$-th    partial derivative,    we  derive
\begin{equation*}\begin{split}
\left|\frac {\partial^2   } {\partial \ell \partial x_N}  U(x) \right|
&\le\frac {K_N}{\frac {R}2} \sup_{y \in {B}(x,\frac {R}2)}\left|\frac{\partial  }{\partial x_N} U(y)\right|
\\&\le \frac{2 K_N} {R}   C _1  \left(\frac{R}2\right)^{\alpha-1}
\\&= 2^{2-\alpha} K_N C_1 R^{\alpha-2},
\end{split}\end{equation*}
since for $y \in {B}(x,\frac {R}2)$ we have $d(y,\partial E_\Psi)\ge\frac {R}2$.

After we permute the $N$-th partial  derivative and the derivative in the direction $\ell$, we obtain
\begin{equation*}
\left|\frac {\partial^2 } {\partial x_N\partial \ell}  U (x)\right| 
\le  4 K_N  C_1  d(x, \partial  E_\Psi)^{\alpha-2},\quad x=(x',x_N)\in E_\Psi.
\end{equation*}
By applying  the  Fundamental Theorem of Calculus,  we  derive   (note  that  $x_0 = x$)
\begin{equation}\label{EQ.PART2}\begin{split}
\left|\frac {\partial U} {\partial {\ell }}(x_\lambda) -\frac {\partial U} {\partial {\ell }} (x_0)\right| &
=\left|\frac {\partial} {\partial {\ell }}
U(x',x_N + \lambda) -\frac {\partial} {\partial {\ell }} U (x',x_N)\right|  
\\&=\left|\int_{0}^{\lambda}  \frac {\partial^2 U} {\partial x_N\partial \ell} (x_t) dt \right|
\\&\le  4K_N  C_1 \int_{0}^{\lambda} d(x_t,\partial E_\Psi)^{\alpha-2}dt.
\end{split}\end{equation}
We  estimate  the last integral by applying Lemma \ref{LE.DISTANCE.FROM.GRAPH}.  Since by this lemma
\begin{equation*}
d(x_t,\partial E_\Psi)\ge (d(x,\partial E_\Psi)  + t)\cos \gamma ,\quad 0\le t\le \lambda,
\end{equation*}
we have
\begin{equation*}\begin{split}
\int_{0}^{\lambda} d(x_t,\partial E_\Psi)^{\alpha-2}dt &
\le (\cos \gamma) ^{\alpha-2} \int_ 0 ^\lambda ( d(x,\partial E_\Psi) + t)^{\alpha-2} dt
\\& = \frac {(\cos \gamma) ^{\alpha-2}} {1-\alpha}
\left(d(x,\partial E_\Psi)^{\alpha-1}  -( d(x,\partial E_\Psi) + \lambda)^{\alpha-1}\right).
\end{split}\end{equation*}
Let us  denote
\begin{equation*}
C_2 = \frac {4 K_N  C_1}  {(1-\alpha)(\cos\gamma)^{2-\alpha}}.
\end{equation*}
Now,  from  \eqref{EQ.PART2}, we derive
\begin{equation}\label{EQ.PART2.2}
\left|\frac {\partial }{\partial {\ell }}U(x  _ \lambda)  -\frac {\partial } {\partial {\ell }}  U (x)\right|
\le  C_2\left(d(x,\partial E_\Psi)^{\alpha-1}  -  ( d(x,\partial E_\Psi) + \lambda)^{\alpha-1}\right).
\end{equation}

Since, $\lim_{\lambda \to \infty } {d(x_\lambda,\partial E_\Psi)} = \infty$    (which follows from the distance estimate
from  below  given in  the Lemma \ref{LE.DISTANCE.FROM.GRAPH}), according         to the second  conclusion given in the
Lemma \ref{LE.GRADIENT.DISTANCE}, we have
\begin{equation*}
\lim_{\lambda\to \infty} |\nabla U(x_\lambda)| = 0.
\end{equation*}
The  same holds  for derivative  in direction  $\ell$ instead  of the gradient. Therefore, if we let $\lambda\to \infty$
in  \eqref{EQ.PART2.2}, we obtain
\begin{equation*}
\left|\frac{\partial }{\partial \ell} U (x)\right|\le C_2  d(x,\partial E_\Psi) ^{\alpha-1}.
\end{equation*}
Since this  estimate holds for every direction  $\ell$,  we derive the gradient estimate
\begin{equation*}\begin{split}
|\nabla U (x)|  & = \sup_{\ell\in \mathbb{R}^N, |\ell|=1}
\left|\frac{\partial }{\partial \ell} U(x)\right| \le C_2  d(x,\partial E_\Psi)  ^{\alpha-1},
\quad x = (x',x_N)\in E_\Psi.
\end{split}\end{equation*}

\textit{III part}.
We  shall proceed now to prove that $U$ belongs to the Lipschitz  class  $\Lambda_\alpha (E_{\Psi})$,  i.e., that  there
exists  the constant $C_3$.

Let $x = (x',x_N)\in E_\Psi$,  $y = (y',y_N)\in E_\Psi$ be arbitrary. Chose    $\lambda>0$ and  $\lambda'>0$   such that
$N$-th  coordinates of  $x_\lambda$ and $y_{\lambda'}$ are  equal, i.e.,   such that  $x_N+\lambda = y_N +\lambda'$, and
\begin{equation*}
\min\{\lambda ,\lambda' \}  =  (1+L) |x-y|.
\end{equation*}
Since  $\Psi$ is $L$-Lipschitz, the segment
$[x_\lambda, y_{\lambda'}]   =   \{t x_\lambda +    (1-t)y_{\lambda'}: t\in [0,1]\}$    belongs  to the domain $E_\Psi$.
For  the same reason,  we  have  $d(z,\partial E_\Psi)\ge  |x-y|$,  $z\in [x_\lambda, y_{\lambda'}]$,  so having in mind
the proved  gradient estimate,  it follows
\begin{equation*}\begin{split}
\int _ {[x_\lambda,y_{\lambda'}]} |\nabla  U( z )| |dz| & 
\le C_2 \int _ {[x_\lambda,y_{\lambda'}]} d(z,\partial E_\Psi)^{\alpha-1} |dz|
\\&\le C_2  | y_ {\lambda'} - x_{\lambda}|    |x-y|^{\alpha-1}
\\&\le  C_2 |x-y|^\alpha.
\end{split}\end{equation*}

Now, we have
\begin{equation*}\begin{split}
|U(x) - U (y)| & \le |U(x) - U (x_\lambda)| + |U(x_\lambda) - U (y_{\lambda'})| + |U(y_ {\lambda '}) - U (y)|
\\& \le C |x - x_\lambda |^\alpha  + \int _ {[x_\lambda,y_{\lambda'}]} |\nabla  U( z )| |dz|
+ C|y-y_{\lambda'}|^\alpha
\\&  \le  C (\lambda^\alpha  + {\lambda' }^\alpha ) +C_2 |x-y|^\alpha.
\end{split}\end{equation*}

Assume that $\lambda\le \lambda'$. Then $\lambda = (1+L) |x-y|$.     Since $\lambda '= \lambda  +  x_ N - y_N$, we  have
$\lambda ' \le \lambda +|x-y| $. It  follows
\begin{equation*}\begin{split}
|U(x) - U (y)| & \le   C (2\lambda^\alpha  + |x-y|^\alpha ) +C_2 |x-y|^\alpha
\\&=  (2 C (1+L)^\alpha  +  C + C_2 ) |x-y|^\alpha.
\end{split}\end{equation*}
Therefore, for the constant   $C_3$   we may take  $C_3 =  (2L+3)C + C_2 $.
\end{proof}

\begin{remark}
A version of the our main result probably holds  for harmonic functions on more general Lipschitz domains than   domains
which are strict epigraphs of Lipschitz  functions on    $\mathbb{R}^ {N-1}$.         This question will  be  considered
elsewhere. We give here some remarks on the proof of the main theorem which concern possible extensions of the   result.

It seems that  the existence  of the constant  $C_1$ may be proved in a very  general setting, even when we do not  have
any strict  conditions  on the  boundary of a  domain.

As  may be readly  seen, the main problem in the course of a   potential extension of the result is  the  existence   of
the constant $C_2$ for the gradient estimate. In our  proof  the crucial role plays the behavior of the gradient  of   a
bounded harmonic function on an unbounded domain.     At this moment it is not clear  how to overcome this difficulty in
case of  bounded domains.

If  one  establishes the existence of the constant  $C_2$, the existence of  $C_3$ is straightforward in case         of
Lip$_\alpha$-extension domains.  For the exposition on this type of domain    we refer  to Lappalainen      dissertation
\cite{LAPPALAINEN.AASF}.  Below  we briefly  mention  some facts.

On  the  Lipschitz  class  $\Lambda^\alpha (E)$  the norm  may be   introduced by
\begin{equation*}
\|f\|_{\alpha} = \sup _{x,  y\in E,\, x\ne y}  \frac {|f(x) - f(y)|}{|x-y|^\alpha}.
\end{equation*}
On the other hand, the local Lipschitz class $\Lambda^\alpha _ \mathrm {loc}(E)$ contains functions $f$ on $E$ for which
\begin{equation*}
\|f\|_{ \mathrm{loc}, \alpha} = \sup _{x,  y\in E,\,  x\ne y,\,  d(x,y)< \frac {d(x,\partial E)}2} 
\frac {|f(x) - f(y)|}{|x-y|^\alpha}.
\end{equation*}
is  finite. This number is defined  to be  the  norm in   $\Lambda^\alpha _ \mathrm {loc}(E)$.

By Gehring and Martio \cite{GEHRING.AASF},                             a domain $D\subseteq \mathbb {R}^N$ is called the
$\mathrm {Lip}_\alpha$-extension  domain if    $\Lambda^\alpha_\mathrm{loc}(D) =\Lambda^\alpha(D)$ along  with  the norm
equivalence.  They proved the  following  criterion: A domain $D$ is the $\mathrm {Lip}_\alpha$-extension domain if  and
only if there exists a constant $C_D$ such that for every     $x, y\in D$ may be connected by       a rectifiable  curve
$\gamma\subset D$ such that
\begin{equation}\label{EQ.CD}
\int_\gamma d(z,\partial  D) ^{\alpha-1} |dz|\le C_D |x-y|^\alpha.
\end{equation}

For example,   uniform, or John   domains  in  $\mathbb{R}^N$  are Lip$_\alpha$-extension domains. It is known that  the
class  of   uniform domains  contains  the class of Lipshitz domains. Since the function  $\Psi$    in the main  theorem
is    Lipschitz,   $E_\Psi$ is the  Lip$_\alpha$-extension  domain. By modifying of  the last part of the  proof of  the
main  theorem one shows that $E_\Psi$  is the  Lip$_\alpha$-extension  domain. Therefore, the existence of the  constant
$C_3$ may be alternatively derived from the following result proved       by the author quite recently in a very general
setting -- for regularly  oscillating  mappings between  metric spaces \cite{MARKOVIC.JGA}.

\begin{proposition}
Let $D\subseteq \mathbb {R}^N$ be a Lip$_\alpha$-extension domain.  If a function  $f$ is differentiable on $D$, and  if
it  satisfies
\begin{equation*}
|\nabla f(z)| \le C_f d(z, \partial D)^{\alpha-1},\quad z\in  D,
\end{equation*}
where $C_f$ is a constant, then   $f$   belongs to the class $\Lambda^\alpha(D)$,  and we have
\begin{equation*}
|f(x)- f(y)|\le  C_D C_f |x-y|^{\alpha},\quad x, y\in D,
\end{equation*}
where $C_D$ is the constant  in \eqref{EQ.CD}.
\end{proposition}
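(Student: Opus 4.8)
The plan is to push the single admissible curve furnished by the Gehring--Martio criterion through the pointwise gradient bound. Fix two points $x,y\in D$. Since $D$ is a $\mathrm{Lip}_\alpha$-extension domain, the characterization \eqref{EQ.CD} provides a rectifiable curve $\gamma\subset D$ joining $x$ to $y$ with
\begin{equation*}
\int_\gamma d(z,\partial D)^{\alpha-1}\,|dz|\le C_D\,|x-y|^\alpha .
\end{equation*}
The entire argument then amounts to showing that $|f(x)-f(y)|$ is controlled by $C_f$ times this integral.

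First I would reduce the increment $f(x)-f(y)$ to a line integral of $\nabla f$ along $\gamma$. Parametrizing $\gamma$ by arc length, $\gamma:[0,\ell]\to D$, makes $\gamma$ a $1$-Lipschitz map with $|\gamma'(t)|=1$ for a.e.\ $t$; set $g=f\circ\gamma$, so that $g(0)=f(x)$ and $g(\ell)=f(y)$. The chain rule gives $g'(t)=\langle\nabla f(\gamma(t)),\gamma'(t)\rangle$ at a.e.\ $t$, whence $|g'(t)|\le|\nabla f(\gamma(t))|$. Thus, once $g$ is known to be absolutely continuous, the fundamental theorem of calculus yields
\begin{equation*}
|f(x)-f(y)|=|g(\ell)-g(0)|\le\int_0^\ell|\nabla f(\gamma(t))|\,dt=\int_\gamma|\nabla f(z)|\,|dz| .
\end{equation*}

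Securing that absolute continuity is the one delicate point, precisely because $f$ is assumed only differentiable, not $C^1$, so the naive passage from differentiability to the line-integral identity is not automatic along a merely rectifiable path. I would resolve it by exploiting the gradient bound itself: $|\nabla f(z)|\le C_f\,d(z,\partial D)^{\alpha-1}$ shows $\nabla f$ is locally bounded, hence $f$ is locally Lipschitz on $D$. The image $\gamma([0,\ell])$ is a compact subset of the open set $D$, so it lies at a positive distance from $\partial D$; covering it by finitely many balls on which $f$ is Lipschitz and invoking a Lebesgue-number argument, one gets a uniform constant $M$ for which $g=f\circ\gamma$ is $M$-Lipschitz on each short subinterval, hence $M$-Lipschitz on all of $[0,\ell]$. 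In particular $g$ is absolutely continuous, which legitimizes the displayed estimate.

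It then remains only to substitute. Inserting the gradient bound into the line integral and invoking \eqref{EQ.CD},
\begin{equation*}
|f(x)-f(y)|\le\int_\gamma|\nabla f(z)|\,|dz|\le C_f\int_\gamma d(z,\partial D)^{\alpha-1}\,|dz|\le C_f C_D\,|x-y|^\alpha .
\end{equation*}
As $x,y\in D$ were arbitrary, this gives $f\in\Lambda^\alpha(D)$ with $\|f\|_\alpha\le C_D C_f$, as claimed. The main obstacle is entirely the absolute-continuity step; once the line integral is justified, the conclusion is a one-line substitution governed by the defining inequality of a $\mathrm{Lip}_\alpha$-extension domain.
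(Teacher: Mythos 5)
Your proposal is correct. Note, though, that the paper itself gives no proof of this proposition: it is quoted from the author's earlier work \cite{MARKOVIC.JGA}, where it is established in the more general setting of regularly oscillating mappings between metric spaces. So the comparison is really with the paper's internal analogue, Part III of the proof of the main theorem, which runs the same strategy in the special case $D=E_\Psi$: there the curve satisfying \eqref{EQ.CD} is built by hand (two vertical rays plus the segment $[x_\lambda,y_{\lambda'}]$), and the increment of $U$ is controlled by the Fundamental Theorem of Calculus along those pieces together with the gradient bound. Your argument is the natural abstract version of that: take the curve furnished directly by the Gehring--Martio criterion and integrate the gradient bound along it. The one genuinely delicate point you identify --- that $f$ is only assumed differentiable, so the line-integral identity along a merely rectifiable curve needs justification --- is handled correctly: the gradient bound makes $\nabla f$ locally bounded, differentiability everywhere plus the mean value theorem on balls makes $f$ locally Lipschitz, the compactness/Lebesgue-number argument makes $g=f\circ\gamma$ Lipschitz, and then the chain rule a.e.\ (valid since $f$ is totally differentiable at every point and $\gamma$ is differentiable a.e.\ with $|\gamma'|=1$) plus absolute continuity give the required inequality. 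This level of care is not spelled out in the paper's Part III either (where the paths are segments, so the issue is milder); making it explicit is a genuine, if small, added value of your write-up.
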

\end{remark}

\subsection*{Acknowledgment}
The       author thanks the reviewer for the detailed reading of this work and for the comments that contributed to  the
higher quality of the  paper.

\end{document}